\newtheorem{theorem}{Theorem}[section]
\newtheorem{corollary}[theorem]{Corollary}
\newtheorem{lemma}[theorem]{Lemma}
\newtheorem{proposition}[theorem]{Proposition}
\newtheorem{conjecture}{Conjecture}[section]
\theoremstyle{definition}
\newtheorem{remark}[theorem]{Remark}
\newcommand{\F}{\mathbb{F}}
\newcommand{\Q}{\mathbb{Q}}
\newcommand{\Z}{\mathbb{Z}}
\newcommand{\p}{\mathfrak{p}}
\newcommand{\q}{\mathfrak{q}}
\newcommand{\eps}{\varepsilon}
\newcommand{\abs}[1]{\left\lvert#1\right\rvert}
\newcommand{\ord}{\text{ord}}
\title{Primitive divisors of sequences associated to elliptic curves}
\author{Matteo Verzobio}
\date{}
\begin{document}
	\maketitle
	
	\begin{abstract}
		Let $\{nP+Q\}_{n\geq0}$ be a sequence of points on an elliptic curve defined over a number field $K$. In this paper, we study the denominators of the $x$-coordinates of this sequence. We prove that, if $Q$ is a torsion point of prime order, then for $n$ large enough there always exists a primitive divisor. Later on, we show the link between the study of the primitive divisors and the Lang-Trotter conjecture. 
	\end{abstract}

	\section{Introduction}
	
	Let $E$ be an elliptic curve defined over a number field $K$, $P$ a non-torsion point of $E(K)$ and $Q$ a point of $E(K)$ such that, for every $n>0$, $nP\neq -Q$. Since every fractional ideal of $K$ has a unique factorization, we can write
	\[
	(x(nP+Q))=\frac{C_n}{D_n}
	\]
	where $C_n$ and $D_n$ are two relatively prime integral ideals. If $K=\Q$, then $D_n$ can be represented uniquely by a positive integer. We want to understand when a term of the sequence $\{D_n\}_{n>0}$ has a primitive divisor, i.e., when there exists a prime ideal $\p$ such that \[\p \nmid D_1D_2\cdots D_{n-1} \mbox{    but    } \p\mid D_n.\]
	 If $\p$ is a primitive divisor of $D_n$, then $n$ is the smallest positive integer such that
	 \[
	 nP+Q\equiv O \mod{\p}.
	 \]  There are some results under the hypothesis $Q=O$. In 1988, Silverman proved the following theorem.
	\begin{theorem}[Silverman\cite{silverman}]\label{silv}
If $Q=O$ and $K=\Q$, then $D_n$ has a primitive divisor for every $n$ large enough.
	\end{theorem}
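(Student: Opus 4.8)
The plan is to run the classical Zsygmondy--Bang argument in the elliptic setting, combining the multiplicative structure of $\{D_n\}$ coming from reduction modulo primes with the quadratic growth of the canonical height. First I would record two structural facts. (i) For each rational prime $p$, the set $\{n\ge 1 : p\mid D_n\}$ consists exactly of the positive multiples of a single integer $m_p$ — namely $m_p=\mathrm{ord}(\tilde P)$, the order of the reduction of $P$ in $\tilde E(\F_p)$ — because $p\mid D_n$ is equivalent to $nP$ lying in the kernel of reduction $E(\Q)\to\tilde E(\F_p)$, and this kernel meets $\langle P\rangle$ in a subgroup of $\Z$. (ii) For every $p$ outside a finite set $S$ (which I take to contain $2$, possibly $3$, and the primes of bad reduction), the theory of the formal group gives the exact relation
\[
v_p(D_n)=v_p(D_{m_p})+2\,v_p(n/m_p)\qquad\text{whenever }m_p\mid n,
\]
the factor $2$ coming from $x=-1/z^{2}+\cdots$ in the formal parameter $z=-x/y$; for $p\in S$ one still has a bound $v_p(D_n)\le v_p(D_{m_p})+C_p\,v_p(n/m_p)$, so $\sum_{p\in S}v_p(D_n)\log p=O(\log n)$.

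The second ingredient is the size of $D_n$. From $\hat h(nP)=n^{2}\hat h(P)$, the comparison $\hat h(nP)=\tfrac12\log\max(|C_n|,D_n)+O(1)$ between canonical and naive height, and Siegel's theorem in the shape $\log|C_n|/\log D_n\to 1$ (so that the numerator and denominator of $x(nP)$ have asymptotically equal logarithmic size), one gets
\[
\log D_n=(2+o(1))\,n^{2}\,\hat h(P),\qquad n\to\infty,
\]
and in particular the uniform upper bound $\log D_m\le(2+o(1))\,m^{2}\hat h(P)$.

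Now suppose, for contradiction, that $D_n$ has no primitive divisor for arbitrarily large $n$. Then every prime $p\mid D_n$ divides some $D_m$ with $m<n$; by (i) this forces $m_p\mid m<n$, so $m_p$ is a proper divisor of $n$, and hence $m_p\mid n/q_p$ for some prime $q_p\mid n$. For $p\notin S$, applying (ii) to $n$ and to $n/q_p$ gives $v_p(D_n)-v_p(D_{n/q_p})=2\,v_p(q_p)$, where $v_p(q_p)\le 1$ and is nonzero only when $p\mid n$. Summing $v_p(D_n)\log p$ over all $p$, using $v_p(D_{n/q_p})\le\sum_{q\mid n}v_p(D_{n/q})$ for the good primes and the bound from (ii) for $p\in S$, I obtain
\[
\log D_n\le\sum_{q\mid n\text{ prime}}\log D_{n/q}+O(\log n).
\]
Finally I bound the right-hand side with the size estimate: the primes $q\le\sqrt n$ contribute at most $(2+o(1))\,n^{2}\hat h(P)\sum_{q\text{ prime}}q^{-2}$ (the $o(1)$ uniform since then $n/q\to\infty$), while the at most $\log_2 n$ primes $q>\sqrt n$ each contribute $O(n)$, for a total $O(n\log n)=o(n^{2})$. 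Since $\sum_{q\text{ prime}}q^{-2}<\tfrac12$, this yields $\log D_n<(1+o(1))\,n^{2}\hat h(P)$, contradicting $\log D_n=(2+o(1))\,n^{2}\hat h(P)$ once $n$ is large.

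The real content, and the main obstacle, is the quantitative lower bound $\log D_n\ge(c+o(1))\,n^{2}$ with $c$ strictly above $2\hat h(P)\sum_q q^{-2}\approx 0.90\,\hat h(P)$: this is exactly where Siegel's theorem on $S$-integral points enters in an essential way, since a priori $\log D_n$ could be as small as $o(n^{2})$. The formal-group valuation identity at the good primes is routine; the places requiring genuine care are its substitute for $p\in S$ and the uniformity of the error terms over the divisors $n/q$ in the last display.
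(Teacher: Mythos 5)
The paper does not reprove Theorem~\ref{silv}; it cites Silverman~\cite{silverman} and then extends the same machinery in the proof of Theorem~\ref{Teo1}. Your blind proposal is, up to minor variations, Silverman's original argument and the same strategy the paper itself deploys for Theorem~\ref{Teo1}: assume $D_n$ has no primitive divisor, use the formal group to show every prime of $D_n$ already appears in some $D_{n/q}$ with an exactly-controlled extra power $2v_p(q)$, sum the resulting inequality over divisors, exploit $\sum_q q^{-2}<\tfrac12$, and play this against the $(2+o(1))\,n^2\hat h(P)$ size of $\log D_n$ furnished by the canonical height plus Siegel's theorem. Two cosmetic differences are worth flagging: you sum only over prime divisors $q\mid n$ (exploiting $m_p\mid n,\ m_p<n\Rightarrow m_p\mid n/q$), which tightens the constant compared with the paper's sum over all $d\mid n$, $d>1$; and for $\nu\in S$ you assert a pointwise bound $v_p(D_n)\le v_p(D_{m_p})+C_p\,v_p(n/m_p)$ giving an $O(\log n)$ contribution, whereas the paper (Proposition~\ref{Siegel}) handles $S$ by Siegel's theorem alone, obtaining only $h_\nu(nP)\le\varepsilon\,h(nP)$. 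Your stronger claim is plausible (via Néron models and the finiteness of $E(\Q_p)/E_1(\Q_p)$) but is the one place where you would need to supply a real argument; since the weaker Siegel bound already suffices for the contradiction, this is a repairable gap rather than a flaw in the approach.
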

This result was generalized by Cheon and Hahn.
\begin{theorem}[Cheon, Hahn\cite{ChHa}]
	If $Q=O$ and $K$ is a number field, then $D_n$ has a primitive divisor for every $n$ large enough.
\end{theorem}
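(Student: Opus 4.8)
The plan is to adapt Silverman's method for $K=\Q$. Normalize the canonical height $\hat h$ on $E$ so that $[K:\Q]\,\hat h(R)=\sum_v n_v\hat\lambda_v(R)$, where $\hat\lambda_v$ are the Néron local heights and $n_v=[K_v:\Q_v]$; for $R\in E(K)$ let $D_R$ denote the denominator ideal of $x(R)$, so $D_n=D_{nP}$. Call $\mathfrak p$ a primitive divisor of $D_n$ exactly when its rank of apparition $m_{\mathfrak p}$ (the least $m\geq1$ with $\mathfrak p\mid D_m$) equals $n$, and write $D_n^{\mathrm{prim}}=\prod_{m_{\mathfrak p}=n}\mathfrak p^{\ord_{\mathfrak p}(D_n)}$ for the corresponding factor of $D_n$. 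Then $D_n$ has a primitive divisor iff $\log N(D_n^{\mathrm{prim}})>0$, so the goal is to show $\log N(D_n^{\mathrm{prim}})\to\infty$.

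\emph{Step 1 (local structure).} For a prime $\mathfrak p$ of good reduction, $\mathfrak p\mid D_n$ forces $m_{\mathfrak p}\mid n$, and the formal group at $\mathfrak p$ gives $\ord_{\mathfrak p}(D_n)=\ord_{\mathfrak p}(D_{m_{\mathfrak p}})+\delta_{\mathfrak p}(n)$, where the correction $\delta_{\mathfrak p}(n)$ is controlled by $v_p(n/m_{\mathfrak p})$ and the ramification index $e_{\mathfrak p}$ ($p$ the residue characteristic) and vanishes unless $p\mid n/m_{\mathfrak p}$. Since only finitely many rational primes ramify in $K/\Q$ and only finitely many $\mathfrak p$ have bad reduction, $\sum_{\mathfrak p\mid D_n}\delta_{\mathfrak p}(n)\log N\mathfrak p=O(\log n)$ and the bad primes contribute $O(1)$, so $\log N(D_n)=\sum_{d\mid n}\log N(D_d^{\mathrm{prim}})+O(\log n)$. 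Möbius inversion then gives
\[
\log N(D_n^{\mathrm{prim}})=\sum_{d\mid n}\mu(n/d)\log N(D_d)+O\!\big(\tau(n)\log n\big),
\]
with $\tau$ the divisor function; note $\tau(n)\log n=o(n^2)$.

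\emph{Step 2 (height asymptotics).} Comparing the naive and canonical heights of $x(nP)$,
\[
\log N(D_n)=[K:\Q]\,\hat h(nP)-\sum_{v\mid\infty}n_v\hat\lambda_v(nP)+O(1)=[K:\Q]\,n^2\hat h(P)-\sum_{v\mid\infty}n_v\hat\lambda_v(nP)+O(1).
\]
Each $\hat\lambda_v$ is bounded below on $E(K_v)$ with only a logarithmic pole at $O$, so the archimedean sum is automatically $\geq-O(1)$; the real content is the opposite bound, i.e. that $nP$ does not approach $O$ too quickly at the infinite places. This is the elliptic analogue of Siegel's theorem, and I would deduce it from the theory of linear forms in elliptic logarithms, which supplies an effective bound $\|nP\|_v\gg_{E,K}n^{-\kappa}$ (with $\|\cdot\|_v$ a $v$-adic distance to $O$), whence $\hat\lambda_v(nP)=O(\log n)$; in fact the much softer statement that the elliptic logarithm of $P$ is not a Liouville vector modulo the period lattice would suffice. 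Hence $\log N(D_d)=[K:\Q]\,d^2\hat h(P)+O(\log d)$.

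\emph{Step 3 (conclusion and obstacle).} Substituting into Step~1 and collecting all errors,
\[
\log N(D_n^{\mathrm{prim}})=[K:\Q]\,\hat h(P)\sum_{d\mid n}\mu(n/d)\,d^2+O\!\big(\tau(n)\log n\big)=[K:\Q]\,\hat h(P)\,J_2(n)+o(n^2),
\]
where $J_2(n)=n^2\prod_{p\mid n}(1-p^{-2})\geq n^2/\zeta(2)=\tfrac{6}{\pi^2}n^2$ is the Jordan totient. Since $P$ is non-torsion, $\hat h(P)>0$, so $\log N(D_n^{\mathrm{prim}})\geq\tfrac{6}{\pi^2}[K:\Q]\hat h(P)\,n^2-o(n^2)\to\infty$, and $D_n$ has a primitive divisor once $n$ is large. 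The heart of the matter — the step I expect to be the main obstacle — is the estimate in Step~2 that $\log N(D_n)/n^2\to[K:\Q]\hat h(P)$, i.e. that the archimedean local heights are negligible; over a number field this must be controlled at every infinite place simultaneously, and it is precisely where Diophantine approximation on $E$ enters. The remainder is bookkeeping with the formal group together with elementary estimates for arithmetic functions, the one point demanding care being the uniformity in $n$ of the error terms in Step~1.
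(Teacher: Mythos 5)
The paper does not reprove Cheon--Hahn; it cites \cite{ChHa} and then runs the same circle of ideas (Silverman's method, adapted) to prove Theorem \ref{Teo1}, which is Cheon--Hahn with a torsion translate $Q$. So the relevant comparison is to that argument. Your proposal is correct in substance, but it is organized differently. The paper (and Silverman, and Cheon--Hahn) argues by contradiction: assume $D_n$ has no primitive divisor, so every prime of $D_n$ already appears in some $D_{n/d}$ (Lemma \ref{formal}, Lemma \ref{ord}), bound $\log N(D_n)$ from above by roughly $2\hat h(P)\,n^2\sum_{d>1}d^{-2}+O(n\log n)$, handle the places in $S$ via Siegel (Proposition \ref{Siegel}), and note that $\sum_{d>1}d^{-2}=\zeta(2)-1<1$ forces a contradiction for large $n$. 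You instead form $D_n^{\mathrm{prim}}$ directly, Möbius-invert $\log N(D_n)=\sum_{d\mid n}\log N(D_d^{\mathrm{prim}})+O(\log n)$, and produce the explicit positive coefficient $J_2(n)\ge n^2/\zeta(2)$. These are dual bookkeepings of the same inequality -- $\zeta(2)-1<1$ on one side, $1/\zeta(2)>0$ on the other -- and both are fine; your version is arguably tidier because it gives an asymptotic for $\log N(D_n^{\mathrm{prim}})$ rather than only a contradiction.

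Two caveats worth flagging. First, the heart of the matter is exactly where you locate it, Step 2; but the paper discharges it by invoking Siegel's theorem for elliptic curves (\cite[Theorem IX.3.1]{arithmetic}), while you propose to go through linear forms in elliptic logarithms. That is strictly stronger input than needed for the statement: it would make the constant effective (which Siegel's route cannot, cf.\ the Remark after the proof of Theorem~\ref{Teo1}), but it is considerably heavier machinery, and the ``soft'' Liouville-type claim you offer as an alternative is itself not obviously cheaper to justify than Siegel. If you only want the qualitative result, the efficient move is Siegel directly, in the form of Proposition \ref{Siegel}. Second, be careful with the factor of $2$ in the normalization: with the paper's convention $h\approx 2\hat h$, the coefficient in $\log N(D_n)\sim c\,n^2\hat h(P)$ picks up that $2$, and your Step 2 identity as written suppresses the nonarchimedean local terms at bad/ramified places and the distinction between $h_\nu$ and $\hat\lambda_\nu$; these only contribute $O(1)$ and $O(\log n)$ respectively, so nothing breaks, but the displayed equation is not literally an identity.
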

In the case when $Q\neq O$ it was proved, in \cite{shparlinski}, that \[\omega_K(\prod_{n=M}^{N+M}D_n)\gg N\] where $\omega_K$ is the function that counts the number of distinct prime divisors of an integral ideals in $K$. Thanks to this result it is reasonable to think that the result of Cheon and Hahn it is true also in the case when $Q\neq O$. We will generalize this result under the assumption that $Q$ is a torsion point of prime order.

\begin{theorem}\label{Teo1}
	Let $E$ be an elliptic curve defined over a number field $K$, let $P$ be a non-torsion point and $Q$ be a torsion point of prime order. Then there exists a constant $C$, such that, if $n>C$, then $D_n$ has a primitive divisor.
\end{theorem}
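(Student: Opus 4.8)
The plan is to run the Bang–Zsygmondy strategy of Silverman and Cheon–Hahn: estimate $\log N(D_n)$ from below by a quantity of order $n^{2}$, bound from above the part of $D_n$ supported on non-primitive prime ideals by something strictly smaller, and conclude. Write $m$ for the (prime) order of $Q$, enlarge the bad set $S$ to contain the primes above $m$ and those dividing the denominator of $x(Q)$, and work outside $S$. There $\p\mid D_n$ iff $\overline{nP}=-\overline{Q}$ in $E(\F_\p)$, and since $m$ is prime this forces $\overline{nP}$ to have order exactly $m$; hence the order $r_\p$ of $\overline{P}$ equals $mg$ for some $g\mid n$ with $\gcd(m,n/g)=1$, the least index at which $\p$ occurs is $\equiv n\pmod{mg}$, and $\p$ is a primitive divisor of $D_n$ exactly when $r_\p\ge n$. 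The algebraic input is the identity $m(nP+Q)=mnP$ (valid since $mQ=O$): at $\p\notin S$ it reads $z(mnP)=[m]\!\bigl(z(nP+Q)\bigr)$ in the formal group, and as $[m]$ is a formal-group automorphism when $\p\nmid m$, we get $v_\p(D_n)=v_\p(D^{O}_{mn})$ whenever $\p\mid D_n$, where $D^{O}_\bullet$ is the $Q=O$ denominator sequence of $(E,P)$. I would run everything simultaneously for all translates, i.e.\ for the sequences $D^{(w)}_\bullet$ of denominators of $x(nP+wQ)$, $0\le w<m$; equivalently these are organized by the $Q=O$ sequence of $E/\langle Q\rangle$ with point $\phi(P)$, for $\phi$ the quotient isogeny.

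For the lower bound, $Q$ torsion gives $\hat h(nP+Q)=\hat h(nP)=n^{2}\hat h(P)$ and $h\bigl(x(nP+Q)\bigr)=2\hat h(nP+Q)+O(1)$; separating finite from infinite places, $\log N(D_n)=[K:\Q]\,h\bigl(x(nP+Q)\bigr)-\sum_{v\mid\infty}[K_v:\Q_v]\log^{+}\lvert x(nP+Q)\rvert_v$. The archimedean term is $O(\log n)$: the points $nP$ equidistribute on $E(\C)$, and a Baker-type lower bound for linear forms in elliptic logarithms (David's theorem) forbids $nP$ from being anomalously close to $-Q$. Hence $\log N(D_n)=2[K:\Q]\hat h(P)\,n^{2}+O(\log n)$, and likewise for each $D^{(w)}_n$.

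The estimate on the non-primitive part is the heart. A non-primitive $\p$ has $r_\p=mg$ with $g\mid n$ and $g<n/m$. Grouping these $\p$ by $g$, using $v_\p(D_n)=v_\p(D^{O}_{mn})$ together with Silverman's lemma on how $v_\p$ moves under multiplication by $n/g$ in the formal group, the contribution of the $\p$ with $r_\p=mg$ is at most $\log N$ of the primitive part of $D^{O}_{mg}$, up to lower-order terms. Summing over $g\mid n$, $g<n/m$, this reduces to the elementary inequality $\sum_{g\mid n,\,g<n/m}(mg)^{2}\le m^{2}n^{2}\sum_{e>m,\,\gcd(e,m)=1}e^{-2}$, the analogue of Silverman's bound $\zeta(2)-1<1$. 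Carrying this out at once for $D^{(1)}_\bullet,\dots,D^{(m-1)}_\bullet$, and using that a prime with a prescribed $r_\p=mg$ contributes to exactly one of them, bounds $\sum_{w=1}^{m-1}\log N\!\bigl(D^{(w)}_n/(\text{primitive part})\bigr)$ by a quantity strictly less than $\sum_{w=1}^{m-1}\log N\!\bigl(D^{(w)}_n\bigr)$; hence the combined primitive part of these $m-1$ sequences has size $\gg n^{2}$, so is nonzero for $n$ large.

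The step I expect to be the genuine obstacle is upgrading ``some translate $wQ$ has a primitive divisor'' to the statement for the prescribed $Q$ — equivalently, showing the $m-1$ quantities $\log N\!\bigl(D^{(w)}_n/(\text{prim.})\bigr)$ are comparable, which is the question of how $\overline{nP}$ is distributed among the $m-1$ generators of $\langle\overline{Q}\rangle$ as $\p$ runs over primes with a fixed $r_\p$. This is precisely the Lang–Trotter/Chebotarev phenomenon flagged in the abstract. For $m=2$ the issue is vacuous (there is only one nontrivial translate), so the argument above already finishes; for $m\ge 3$ one must supply an unconditional distribution input, and I would try to extract it from the Kummer theory of $E$ — the Galois action on torsion and on division points, with $\langle Q\rangle$ pointwise Galois-fixed since $Q\in E(K)$ — exploiting that the large $n^{2}$ versus $O(n\log n)$ gap tolerates a very crude equidistribution.
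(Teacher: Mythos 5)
Your overall blueprint is the right one (Silverman/Cheon--Hahn height comparison, exploit $rQ=O$ to pass to the $Q=O$ sequence, bound the non-primitive part by a divisor sum controlled by $\sum_{d>r}d^{-2}$), and the arithmetic you describe for primes $\p\notin S$ is correct. But the step you flag as ``the genuine obstacle'' is in fact where your argument stops being a proof: you only conclude that the {\it union} of the primitive parts of the $m-1$ translated sequences $D^{(w)}_\bullet$ is large, and you propose (without supplying it) an equidistribution input to pin down the translate $w=1$. That input is not needed, and its absence is a real gap, not a technicality to be waved away.

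What the paper does instead is a short, unconditional Bezout trick that assigns to each non-primitive prime the correct translate directly, with no distribution statement at all. Suppose $\q\notin S$ divides both $D_n$ and $B_{rn/d}$ with $d\mid n$, $(r,d)=1$, $d>r$. Put $P_1=\tfrac{rn}{d}P$ and $P_2=nP+Q$; both reduce to $O$ modulo $\q$, and since $\q\nmid r$ one has $h_\nu(P_2)=h_\nu(rP_2)=h_\nu(rnP)=h_\nu(P_1)+2h_\nu(d)$. Because $(r,d-r)=1$, pick $a,b$ with $ar+b(d-r)=1$; then
\[
\tfrac{n}{d}P+bQ \;=\; a\tfrac{rn}{d}P + b\bigl(nP+Q-\tfrac{rn}{d}P\bigr) \;=\; (a-b)P_1 + bP_2,
\]
and the formal-group lemma (Lemma~\ref{ord}) gives $h_\nu(\tfrac{n}{d}P+bQ)\ge\min\{h_\nu(P_1),h_\nu(P_2)\}=h_\nu(P_1)$, i.e.
\[
h_\nu(nP+Q)\;\le\; h_\nu\!\bigl(\tfrac{n}{d}P+Q_d\bigr)+2h_\nu(d),\qquad Q_d:=bQ.
\]
The crucial point is that $Q_d$ depends only on $d$ and $r$, not on $\q$; so summing over $d\mid n$, $d>r$, and bounding $h(\tfrac{n}{d}P+Q_d)\le 2h(\tfrac{n}{d}P)+\max_{0\le m<r}C_{mQ}$ uniformly kills the ambiguity among the translates in one stroke. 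This is exactly the idea you were missing; once one has it, the rest goes through as you sketched (and the archimedean term is handled by Siegel's theorem, Proposition~\ref{Siegel}, rather than David's theorem --- a lighter tool that suffices). As written, your proposal proves the theorem only for $r=2$ and leaves $r\ge 3$ open.
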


Later on, we will show why the study of the primitive divisors is related with the elliptic analogue of Artin's conjecture, the so called Lang-Trotter conjecture.

\begin{conjecture}[Lang-Trotter]
	Let $E$ be an elliptic curve defined over $\Q$ and $P$ be a point on $E(\Q)$. Let
	\[
	N_P(x)=\{p\leq x \text{  such that  }  p\nmid\Delta \text{  and  } \overline{P} \text{  generates  } E(\F_p)\},
	\]
	where $\overline{P}$ is the reduction of $P$ modulo $p$.
	Then,
	\[
	\#N_P(x) \sim \delta(P)\frac{x}{\log x}
	\]
	as $x$ goes to infinity. 
\end{conjecture}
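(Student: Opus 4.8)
The Lang--Trotter conjecture is the elliptic analogue of Artin's primitive root conjecture and, like the latter, is open; the plan I would follow is Hooley's GRH argument for Artin's conjecture, adapted through the Chebotarev density theorem to elliptic division fields. This at least pins down the constant $\delta(P)$ and, conditionally on GRH, yields the correct order of magnitude up to a logarithmic factor, and I will say exactly where it stalls. First, the bridge to the rest of the paper: taking $Q=O$, a prime $p\nmid\Delta$ of good reduction is a primitive divisor of $D_{\#E(\F_p)}$ precisely when the order of $\overline P$ in $E(\F_p)$ equals $\#E(\F_p)$, i.e.\ precisely when $\overline P$ generates $E(\F_p)$; so the primes counted by $N_P(x)$ are the ``extremal'' primitive divisors, and lower bounds of the type in Theorem~\ref{Teo1} and in \cite{shparlinski} are the elementary inputs toward $\#N_P(x)\to\infty$.

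The first real step is the group-theoretic reformulation: $\overline P$ generates $E(\F_p)$ iff $\ell\nmid[E(\F_p):\langle\overline P\rangle]$ for every prime $\ell$, and this divisibility is a Frobenius condition in a division field. For squarefree $m$ set $L_m=K\bigl(E[m],\,m^{-1}P\bigr)$ --- adjoin the coordinates of the $m$-torsion points and of all $R$ with $mR=P$ --- and $G_m=\Gal(L_m/K)$, which acts affinely on the coset $R_0+E[m]$ of solutions of $mR=P$. One then checks that, for $p$ of good reduction with $p\nmid m$ and unramified in $L_m$, $m\mid[E(\F_p):\langle\overline P\rangle]$ iff $\mathrm{Frob}_p$ lies in the conjugation-stable set $C_m\subseteq G_m$ of those $\sigma$ which for every $\ell\mid m$ either act trivially on $E[\ell]$ or fix a point of $R_0+E[\ell]$. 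Inclusion--exclusion over squarefree $m$ then gives
\[
\#N_P(x)=\sum_{m\ \mathrm{sqfree}}\mu(m)\,\#\{p\le x:\ \mathrm{Frob}_p\in C_m\},
\]
which one truncates at $m\le z$; for $m\le z$ the effective Chebotarev theorem, conditional on GRH for $\zeta_{L_m}$, gives $\#\{p\le x:\mathrm{Frob}_p\in C_m\}=\tfrac{|C_m|}{|G_m|}\,\mathrm{Li}(x)+O\bigl(|C_m|\,x^{1/2}\log(m^{A}\Delta x)\bigr)$. Summing the main terms and invoking Serre's image-of-Galois theorem --- in the non-CM case the $L_\ell$ are linearly disjoint over a fixed finite extension of $K$ for all but finitely many $\ell$, so the local densities multiply (the CM case requires a separate, larger family of fields) --- one identifies $\sum_m\mu(m)|C_m|/|G_m|=\prod_\ell\bigl(1-|C_\ell|/|G_\ell|\bigr)=:\delta(P)$, so that $\#N_P(x)=\delta(P)\,\mathrm{Li}(x)(1+o(1))$ would follow once the error terms are under control.

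That control is the main obstacle, and it is why the conjecture --- even its GRH-conditional form --- is open for general $E$ and $P$. Two sums must be bounded: the Chebotarev error over $m\le z$, which under GRH is $\ll x^{1/2}\sum_{m\le z}|C_m|\log(m\Delta x)$ and so is admissible only for $z$ a small power of $x$, because $[L_m:K]$ grows like $m^{6}$ for generic $E$ (against $m^{2}$ in Hooley's original setting), forcing $z$ far below $\sqrt x$; and the tail $\sum_{m>z}\#\{p\le x:\ m\mid[E(\F_p):\langle\overline P\rangle]\}$, for which only bounds of the shape $\pi(x)/[L_m:K]$ plus ramification terms, or Brun--Titchmarsh-type estimates, are available, and these are not known to be summable once $z$ lies below $\sqrt x$. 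Consequently the Gupta--Murty method yields, under GRH, only $\#N_P(x)\gg x/\log^2 x$ (with sharper, and for CM curves sometimes unconditional, results), not the precise constant $\delta(P)$; unconditionally, with no sufficiently strong Bombieri--Vinogradov-type average over these non-abelian division fields known, even the right order of magnitude is out of reach. A genuine proof would therefore need either such an averaged large-sieve input or a new idea; short of that, one records the heuristic above, notes that $\delta(P)$ may legitimately vanish --- in which case $\#N_P(x)=O(1)$ --- and proves the conditional lower bound.
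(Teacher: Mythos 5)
This item is a conjecture, not a theorem of the paper: the Lang--Trotter statement appears only as motivation, the paper offers no proof of it, and none exists in the literature. So your overall assessment is the correct one, and there is no ``paper proof'' to compare against. Your sketch of the conditional programme --- reformulating ``$\overline P$ generates $E(\F_p)$'' as Frobenius conditions in the fields $\Q\bigl(E[m],m^{-1}P\bigr)$, applying effective Chebotarev under GRH, using Serre's open image theorem to identify the density $\delta(P)$ as an Euler product, and then stalling because $[\Q(E[m],m^{-1}P):\Q]$ grows like $m^6$ so the tail of the inclusion--exclusion cannot be controlled --- is the standard account of why the conjecture is open, and the Gupta--Murty bound $\gg x/\log^2 x$ under GRH is indeed the state of the art. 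One technical slip worth flagging: your set $C_\ell$ is not quite the right Frobenius condition, since $\ell\mid[E(\F_p):\langle\overline P\rangle]$ requires, in the non-split-completely case, both that $\overline P\in\ell E(\F_p)$ (Frobenius fixes a point of $R_0+E[\ell]$) \emph{and} that $E(\F_p)$ contain a point of order $\ell$ (Frobenius fixes a nonzero point of $E[\ell]$); fixing a point of $R_0+E[\ell]$ alone does not suffice when $\ell\nmid\#E(\F_p)$.

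For the comparison you were implicitly asked to make: the paper deliberately sidesteps the conjecture. Its actual contribution in this direction is Theorem \ref{Teo2}, an unconditional lower bound $\#N_{P,Q}(x)\gg\sqrt{\log x}$, obtained not by Chebotarev-type analytic input but by the elementary primitive-divisor machinery --- height inequalities bounding the primes dividing $D_n$ for $n\leq N\asymp\sqrt{\log x}$, combined with the result of \cite{shparlinski} (and, in the corollary, Theorem \ref{Teo1}) to guarantee many distinct prime divisors. Your opening remark linking $N_P(x)$ to ``extremal'' primitive divisors is consistent with this point of view, but note the paper never claims anything about $N_P(x)$ itself, only about the larger set $N_{P,Q}(x)$.
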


We will study a set that it is related to $N_P(x)$. Let $P$ and $Q$ be two points in $E(\Q)$ and define
\[
N_{P,Q}(x):=\{p\leq x\text{  such that  }  p\nmid\Delta \text{  and  } \overline{Q} \in \langle \overline{P} \rangle\ \text{ in } E(\F_p)\},
\]
It is clear that
\[
N_{P,Q}(x) \subseteq N_P(x)
\]
since if $P$ generates $E(\F_p)$, then $Q$ is in the orbit of $P$ modulo $p$.
\begin{theorem}[\cite{seguinphd}]
The set $N_{P,Q}(x)$ is infinite as $x$ goes to infinite if $P$ and $Q$ have infinite order. Furthermore, if $E$ has rank $1$, then
\[
\#N_{P,Q}(x)\gg \sqrt{\log x}.
\]
\end{theorem}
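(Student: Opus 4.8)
The plan is to translate the statement into one about the denominator sequence $\{D_n\}_{n\ge1}$ attached to $P$ and $Q$ over $K=\Q$, and then combine the lower bound of \cite{shparlinski} with the quadratic growth of the canonical height. First dispose of the degenerate case: if $Q\in\langle P\rangle$ inside $E(\Q)$, then $\overline{Q}\in\langle\overline{P}\rangle$ for every prime of good reduction, so $N_{P,Q}(x)=\{p\le x:\ p\nmid\Delta\}$ and the conclusion is clear. So assume $Q\notin\langle P\rangle$; then $nP+Q\ne O$ for every $n\ge1$, so each $D_n$ is defined, and for a prime $p\nmid\Delta$ the condition $p\mid D_n$ is equivalent to $nP+Q\equiv O\pmod p$, which gives $\overline{Q}=-n\overline{P}\in\langle\overline{P}\rangle$. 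Hence, apart from the finitely many primes dividing $\Delta$, every prime dividing some $D_n$ lies in $N_{P,Q}(\infty)$; in fact the two sets are equal, but only this inclusion will be used.

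For the first assertion, \cite{shparlinski} gives $\omega\bigl(\prod_{n=1}^{N}D_n\bigr)\gg N\to\infty$, where $\omega$ counts distinct prime divisors. By the inclusion above, all but at most $\omega(\Delta)$ of these primes lie in $N_{P,Q}(\infty)$, so this set is infinite, i.e. $\#N_{P,Q}(x)\to\infty$.

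For the quantitative bound in the rank-one case I would add one elementary estimate, a uniform upper bound for $D_n$. From $\log D_n\le h\bigl(x(nP+Q)\bigr)=2\hat h(nP+Q)+O(1)$, together with the triangle inequality for the seminorm $\sqrt{\hat h(\cdot)}$ on $E(\Q)\otimes\R$, which gives $\sqrt{\hat h(nP+Q)}\le n\sqrt{\hat h(P)}+\sqrt{\hat h(Q)}$, one obtains a constant $C_1=C_1(E,P,Q)$ with $\log D_n\le C_1 n^2$ for all $n\ge1$. Now take $x$ large and put $N:=\floor{\sqrt{(\log x)/C_1}}$. Every prime dividing $\prod_{n=1}^{N}D_n$ divides some $D_n$ with $n\le N$, so it is at most $D_n\le e^{C_1N^2}\le x$; discarding the at most $\omega(\Delta)$ primes dividing $\Delta$, the remaining ones all belong to $N_{P,Q}(x)$. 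Combined with $\omega\bigl(\prod_{n=1}^{N}D_n\bigr)\gg N$ from \cite{shparlinski}, this yields $\#N_{P,Q}(x)\gg N\gg\sqrt{\log x}$.

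The height estimate and the handling of bad primes are routine; the only substantial input is the bound $\omega\bigl(\prod_n D_n\bigr)\gg N$, which is precisely the theorem of \cite{shparlinski} recalled in the introduction. Thus the real point is the observation that the quadratic growth of $\hat h(nP+Q)$ confines all prime divisors of $D_1,\dots,D_N$ to primes below $e^{O(N^2)}$, converting a count of order $N$ into a count of order $\sqrt{\log x}$. (This argument does not visibly use the rank-one hypothesis, so I would check whether \cite{shparlinski} applies to the given $P$ and $Q$ without it; if not, the rank-one assumption makes $P$ and $Q$ linearly dependent modulo torsion, which should suffice to reduce to a case it covers.)
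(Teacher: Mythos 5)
Your proposal is correct and follows essentially the same route as the paper's own proof of its strengthened Theorem~\ref{Teo2}: bound $\log D_n$ by $O(n^2)$ via the quadratic growth of $\hat h$, choose $N\asymp\sqrt{\log x}$ so that all primes dividing $D_1,\dots,D_N$ lie below $x$, and invoke the bound $\omega\bigl(\prod_{n\le N}D_n\bigr)\gg N$ from \cite{shparlinski}. You also correctly observe at the end that the argument never uses the rank-one or infinite-order hypotheses --- which is precisely the content of the paper's Theorem~\ref{Teo2} generalizing the cited result.
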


We will generalize this result by showing that the condition that $E$ has rank $1$ and the condition that $Q$ has infinite order are unnecessary. 
\begin{theorem}\label{Teo2}
	Let $E$ be an elliptic curve defined over $\Q$ with $P$ a non-torsion point and $Q$ in $E(\Q)$. Then,
	\[
	\#N_{P,Q}(x)\gg \sqrt{\log x}.
	\]
\end{theorem}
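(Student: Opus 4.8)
The plan is to reduce Theorem~\ref{Teo2} to the case where $Q$ is a torsion point, then exploit Theorem~\ref{Teo1} to produce the required primes. Here is the key observation linking the two results: if $\p$ is a primitive divisor of $D_n$ (the denominator sequence attached to $\{nP+Q\}$), then $n$ is the exact order of $\bar P + \bar Q$... no wait, more precisely $nP + Q \equiv O \pmod{\p}$, so $\bar Q = -n\bar P \in \langle \bar P\rangle$ in $E(\F_{\p})$. Thus every primitive divisor of $D_n$, as $n$ ranges over a set of size $\gg T$, contributes a prime below the relevant bound to $N_{P,Q}$; the primitivity guarantees these primes are \emph{distinct} for distinct $n$. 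So the engine is: many $n$'s with primitive divisors $\Rightarrow$ many primes in $N_{P,Q}(x)$, and we just need to control the size of the primes in terms of $n$.

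**First I would** handle the reduction to $K = \Q$ with $Q$ torsion. When $Q$ has infinite order this is already covered by the cited theorem of \cite{seguinphd}, so assume $Q$ is torsion (including $Q = O$, which is covered by Silverman's Theorem~\ref{silv} combined with the same counting argument). If $Q$ has prime order we may apply Theorem~\ref{Teo1} directly; if $Q$ has composite torsion order $m$, I would either prove a version of the counting argument that only needs primitive divisors for a subsequence $\{ n\ell P + Q'\}$ where $Q'$ has prime order $\ell \mid m$, or observe that $\bar Q \in \langle \bar P \rangle$ whenever $\bar Q' \in \langle \bar P\rangle$ for a suitable point $Q'$ of prime order built from $Q$ (e.g. $Q' = (m/\ell) Q$), so that $N_{P, Q'}(x) \subseteq N_{P,Q}(x)$ up to finitely many primes. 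Either way it suffices to treat $Q$ of prime order.

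**The core estimate** is to bound, for $n \leq T$, the size of a prime $\p$ dividing $D_n$: one has $\log \Norm(D_n) \ll n^2$ by the theory of canonical heights (the standard estimate $h(x(nP+Q)) = \hat h(nP+Q) + O(1) = n^2\hat h(P) + O(n)$ and $\log\Norm(D_n) \leq h(x(nP+Q)) \cdot [K:\Q] + O(1)$). Hence a primitive divisor of $D_n$ is a prime $p \leq \Norm(\p) \ll e^{cn^2}$ for an absolute constant $c$. By Theorem~\ref{Teo1} there is a constant $C$ so that $D_n$ has a primitive divisor for every $n > C$; taking $n$ over the integers in $(C, T]$ we obtain at least $T - C$ \emph{distinct} rational primes in $N_{P,Q}(x)$ with $x \ll e^{cT^2}$. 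Setting $T \asymp \sqrt{\log x}$ yields $\#N_{P,Q}(x) \gg \sqrt{\log x}$, as desired. One technical point: primitive divisors of $D_n$ are prime ideals of $K$, and I need rational primes $p$; this is handled by taking $p$ to be the rational prime below $\p$ and noting $p \leq \Norm(\p)$, while distinctness downstairs may fail for at most boundedly many collisions per $p$ (ramification/splitting is bounded), which does not affect the order of growth.

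**The main obstacle** I anticipate is the composite-torsion reduction: Theorem~\ref{Teo1} is stated only for $Q$ of prime order, and it is not a priori obvious that $\bar Q \in \langle \bar P\rangle$ follows cleanly from $\overline{(m/\ell)Q} \in \langle \bar P\rangle$ for all but finitely many $p$ — one must check that reduction is injective on the (finite) torsion subgroup containing $Q$ for $p$ outside a finite bad set, which is standard (\cite{silverman}, reduction is injective on prime-to-$p$ torsion), but the logical direction $\langle \bar P\rangle \ni \overline{(m/\ell)Q} \Rightarrow \langle\bar P\rangle \ni \bar Q$ genuinely requires knowing the order of $\bar Q$ relative to $\langle \bar P\rangle$, so instead I would route through the subsequence $\{(m/\ell) n P + (m/\ell) Q\}$: if this has a primitive divisor then $(m/\ell)nP + (m/\ell)Q \equiv O$, i.e. $(m/\ell)Q \in \langle \bar P\rangle$, which is \emph{weaker} than what we want. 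The clean fix is to instead apply the counting argument to the point $Q$ itself using a direct primitive-divisor statement for $D_n$ attached to $\{nP+Q\}$ with $Q$ of composite order — which should follow from Theorem~\ref{Teo1}'s proof essentially verbatim, since nothing there uses primality of the order except possibly in one place — or simply to cite the infinite-order case of \cite{seguinphd} and dispatch the torsion case via Theorem~\ref{Teo1} for $Q$ of prime order and via Silverman for $Q=O$, accepting a mild restriction. I would aim to state Theorem~\ref{Teo1} (or a corollary of its proof) for arbitrary torsion $Q$ to make the deduction seamless.
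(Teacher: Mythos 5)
Your approach is genuinely different from the paper's, and it has a real gap. The paper's proof of Theorem~\ref{Teo2} does \emph{not} use Theorem~\ref{Teo1} at all; it uses the quantitative estimate cited from \cite{shparlinski}, namely $\omega_K\big(\prod_{n=M}^{N+M}D_n\big)\gg N$, which holds for \emph{any} non-torsion $P$ and any $Q$ with $nP\neq -Q$ for all $n>0$ --- no hypothesis of torsion, let alone prime order. From there the proof is exactly the counting argument you outline in your ``core estimate'' paragraph: bound $\log|D_n|$ by $O(n^2)$ via heights, so primes dividing $D_n$ for $n\leq N$ lie below $x$ with $N\asymp\sqrt{\log x}$, and the Shparlinski bound supplies $\gg N$ distinct such primes. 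Theorem~\ref{Teo1} appears only in the \emph{Corollary} after Theorem~\ref{Teo2}, where it is used to give an explicit leading constant $1/\sqrt{2\hat h(P)}$, and there the prime-order restriction on $Q$ is accordingly imposed.

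The gap in your plan is the case $Q$ of infinite order on a curve of rank $>1$. You write that this ``is already covered by the cited theorem of \cite{seguinphd},'' but as stated in the paper that theorem gives only \emph{infinitude} of $N_{P,Q}(x)$ for $Q$ of infinite order in general; the $\gg\sqrt{\log x}$ bound there is proved only under the additional hypothesis $\mathrm{rank}\;E = 1$. So your case split does not actually cover rank $\geq 2$ with $Q$ non-torsion, and your remaining tool (Theorem~\ref{Teo1} or an extension of it to composite-order torsion) is simply inapplicable there since $Q$ is not torsion. Your worry about composite torsion order is a secondary issue that could likely be patched, but the infinite-order, higher-rank case is a genuine hole. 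The paper sidesteps all of this by replacing ``primitive divisor for every $n>C$'' (a much stronger statement, and only known for special $Q$) with the weaker but fully general ``$\gg N$ distinct prime divisors among $D_1,\dots,D_N$'' from \cite{shparlinski}, which is all the counting argument actually needs.
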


In Section \ref{prelimin} we introduce some basic facts on the elliptic curves that we will use in the paper. Then, in Section \ref{secteo1}, we will prove Theorem \ref{Teo1}, using the same techniques introduced by Silverman in the proof of Theorem \ref{silv}. Finally, in Section \ref{secteo2}, we will prove Theorem \ref{Teo2} and we will show the relation between the study of the primitive divisors of $D_n$ and $N_{P,Q}(x)$.

	\section{Preliminaries on elliptic curves}\label{prelimin}
 
 Let $E$ be an elliptic curve defined by the equation
 \[
 y^2+a_1xy+a_3y=x^3+a_2x^2+a_4x+a_6,
 \]
 take $P$ a non-torsion point and $Q$ a torsion point.
 Let us define
	\[
	(x(nP))=\frac{A_n}{B_n}
	\]
	with $A_n$ and $B_n$ two relatively prime integral ideals.
	Given a valuation $\nu$ associated to a prime $\p\in K$, define
	\[
	h_\nu(P):=\text{max}\{0,-\nu(x(P))\}
	\]
	and
	\[
	h(P)=\frac{1}{[K:\Q]}\sum_{\nu \in M_K} n_\nu h_\nu(P),
	\]
	where $n_\nu$ is the degree of the local extension $K_\p/\Q_p$ and $M_K$ is the set of all the places of $K$. Given a point $P$ in $E(K)$, we define the canonical height as in \cite[Proposition VIII.9.1]{arithmetic}, i.e.
	\[
	\hat{h}(P)=\frac{1}{2}\lim_{N\to \infty}4^{-N}h(2^NP).
	\] First of all, we recall the properties of the height and of the canonical height that will be necessary in this paper. For details see \cite[Chapter 8]{arithmetic}.
	\begin{itemize}
		\item Given a point $R$, there exists a constant $C_R>0$ such that, for every $S$ in $E(K)$,
		\[
		h(R+S)\leq C_R+2h(S).
		\]
		\item There exists a constant $C_E$ such that, for every $R\in E(K)$,
		\[
		\abs{h(R)-2\hat{h}(R)}\leq C_E.
		\]
		\item The canonical height is quadratic, i.e.
		\[
		\hat{h}(nR)=n^2\hat{h}(R)
		\]
		for every $R$ in $E(K)$.
	\end{itemize}

	We will need also the following proposition.
	\begin{proposition}\label{Siegel}
		Given an absolute value $\nu$, for every $\eps>0$ there exists an $N=N(\eps,\nu,P,Q)$ such that, for every $n>N$,
		\[
		h_\nu(nP+Q)\leq \eps h(nP+Q).
		\]
	\end{proposition}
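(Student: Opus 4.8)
The plan is to read the statement as (a small extension of) Siegel's theorem on integral points and to run the classical descent-plus-Roth argument. First I would reduce it to a comparison of growth rates. Since $Q$ is torsion, the canonical height pairing $\langle P,Q\rangle$ vanishes and $\hat h(Q)=0$, so $\hat h(nP+Q)=n^{2}\hat h(P)$; together with $|h(R)-2\hat h(R)|\le C_{E}$ and $\hat h(P)>0$ this yields $h(nP+Q)=2n^{2}\hat h(P)+O(1)\to\infty$. Hence it suffices to prove $h_{\nu}(nP+Q)/h(nP+Q)\to 0$. Assume not: there are $c>0$ and infinitely many $n$ with $h_{\nu}(nP+Q)\ge c\,h(nP+Q)$, and fix once and for all an integer $m$ with $cm^{2}>4\,[K:\Q]$.

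Next comes the descent. As $E(K)/mE(K)$ is finite, after passing to an infinite subset of these $n$ we may write $nP+Q=mS_{n}+T$ with $T\in E(K)$ fixed and $S_{n}\in E(K)$; from $m^{2}\hat h(S_{n})=\hat h(nP+Q-T)=\hat h(nP+Q)+O\!\bigl(\sqrt{\hat h(nP+Q)}\,\bigr)$ (Cauchy--Schwarz for the height pairing) we get $h(S_{n})\sim m^{-2}h(nP+Q)$ and $\hat h(S_{n})\to\infty$. Along this subsequence $h_{\nu}(nP+Q)\to\infty$, so $|x(nP+Q)|_{\nu}\to\infty$, which forces $nP+Q\to O$ in the $\nu$-adic topology of the compact group $E(K_{\nu})$. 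Since $[m]$ is \'etale, multiplication by $m$ is a local homeomorphism of $E(K_{\nu})$; hence after a further thinning $S_{n}$ converges $\nu$-adically to one of the finitely many points $U$ with $mU=-T$, and $U$ is algebraic because $mU=-T\in E(K)$. Using that near $O$ the function $x$ behaves like $z^{-2}$ in the formal (resp. analytic) parameter $z$, and that translation by $T$ and $[m]$ are bi-Lipschitz near the relevant points, one obtains, with constants independent of $n$,
\[
-\log d_{\nu}(S_{n},U)=\tfrac12\,h_{\nu}(nP+Q)+O(1)\ \ge\ \tfrac{c}{2}\,h(nP+Q)+O(1)\ \sim\ c\,m^{2}\hat h(S_{n}).
\]

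I then split on $U$. If $U=O$ (which forces $T=O$), then $h_{\nu}(S_{n})=h_{\nu}(nP+Q)+O(1)\ge c\,m^{2}h(S_{n})(1-o(1))$, contradicting the trivial bound $h_{\nu}(S_{n})\le[K:\Q]\,n_{\nu}^{-1}h(S_{n})$ for $n$ large, by the choice of $m$. If $U\ne O$, then $x(U)$ is a fixed algebraic number, the $x(S_{n})$ are eventually pairwise distinct (as $\hat h(S_{n})\to\infty$), and since $x$ is a local parameter at $U$ when $2U\ne O$ (and vanishes to order $2$ there otherwise, which only helps) the display gives $-\log|x(S_{n})-x(U)|_{\nu}\ge c\,m^{2}\hat h(S_{n})(1-o(1))$. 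But Roth's theorem for the single place $\nu$ of $K$ gives $-\log|x(S_{n})-x(U)|_{\nu}\le(2+\eps)\,h\bigl(x(S_{n})\bigr)+O(1)\le(4+2\eps)\hat h(S_{n})+O(1)$ for all but finitely many $n$. Dividing by $\hat h(S_{n})\to\infty$ and letting $\eps\to0$ gives $c\,m^{2}\le4$, contradicting the choice of $m$; this proves the proposition.

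The step I expect to be the main obstacle is the quantitative control in the second paragraph: one must show that the $\nu$-adic convergence $S_{n}\to U$ happens at a rate comparable to $d_{\nu}(nP+Q,O)$ — equivalently to $e^{-h_{\nu}(nP+Q)/2}$ — with all implied constants uniform in $n$, and keep every height normalization consistent so that Roth's inequality and the height lower bound can be placed side by side. Everything else is bookkeeping. I would also remark that for a non-archimedean $\nu$ one can avoid Roth entirely: a short computation in the formal group of $E$ at $\p$ shows $h_{\nu}(nP+Q)=O_{\nu}(\log n)$, which is far more than enough and settles that case on its own.
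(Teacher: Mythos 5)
Your argument is correct and amounts to the same reduction the paper makes — show $h(nP+Q)\to\infty$ and then invoke the fact that a single local height is asymptotically negligible compared to the global height — except that where the paper simply cites Siegel's theorem in the form of \cite[Theorem IX.3.1]{arithmetic}, you unpack that citation and re-derive it from first principles via descent in $E(K)/mE(K)$, the $\nu$-adic distance estimate, and Roth's theorem, which is precisely the classical proof of that result. Your closing remark that the non-archimedean case follows more cheaply from the formal-group bound $h_\nu(nP+Q)=O_\nu(\log n)$ is also sound, though unnecessary once the citation is in place.
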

	\begin{proof}
			Define $P_n:=nP+Q$. Then,
		\[
		h(P_n)\geq \frac{h(P_n-Q)-C_{-Q}}{2}\geq \hat{h}(nP)-\frac{C_E}{2}-\frac{C_{-Q}}{2}= n^2\hat{h}(P)-\frac{C_E}{2}-\frac{C_{-Q}}{2}
		\]
		and so $h(P_n)\to \infty$ as $n$ goes to infinity.
		We conclude using Siegel's Theorem (see \cite[Theorem IX.3.1]{arithmetic}), since
		\[
		\lim_{h(P_i)\to \infty}\frac{h_v(P_i)}{h(P_i)}=0.
		\]
	\end{proof}

Let $S$ be the finite set of places of $K$ composed by
\begin{itemize}
	\item all archimedean places;
	\item all places over primes where $E$ has bad reduction and the primes $\p$ where the equation defining the elliptic curve is not minimal in $K_\p$;
	\item all places over the primes ramified in $K$;
	\item all places over primes dividing $2\cdot\ord(Q)$, where $\ord(Q)$ is the order of $Q$ in $E$;
	\item the places over the primes where $Q$ reduces to the identity.
\end{itemize}

\begin{lemma}\label{formal}
If $\nu$ is a place not in $S$ and $M$ is a point of $E(K)$ such that $h_\nu(M)>0$, then
	\[
	h_\nu(nM)=h_\nu(M)+2\nu(n).
	\]
\end{lemma}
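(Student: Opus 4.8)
The plan is to work entirely inside the formal group of $E$ at the prime $\p$ underlying the place $\nu$. First I would observe that the hypothesis $h_\nu(M)>0$ means $\nu(x(M))<0$, so that $M$ reduces to the identity modulo $\p$; equivalently $M$ lies in the kernel $\hat E(\mathfrak{m}_\nu)$ of reduction, where $\mathfrak{m}_\nu$ is the maximal ideal of the local ring. Because $\nu \notin S$, the defining equation is minimal at $\p$, $E$ has good reduction there, and $\p$ is unramified over the rational prime $p$ below it; this is exactly the setup in which the standard theory of the formal group applies cleanly. Using the formal parameter $t = -x/y$, one has $\nu(t(M)) = \tfrac{1}{2}h_\nu(M) =: m > 0$ (here I would recall the relation $v(x) = -2v(t)$, $v(y) = -3v(t)$ on the kernel of reduction), and the claim $h_\nu(nM) = h_\nu(M) + 2\nu(n)$ is equivalent to $\nu(t(nM)) = \nu(t(M)) + \nu(n)$.

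The key step is then the formal-group multiplication-by-$n$ estimate: for a formal group $\hat E$ over the ring of integers $R_\nu$ (a discrete valuation ring of residue characteristic $p$), one has $[n](T) = nT + (\text{higher order terms})$, and more precisely, if $\nu(z) = m \ge 1$ and — crucially — $m > \tfrac{\nu(p)}{p-1}$, then $\nu([n](z)) = \nu(n) + \nu(z)$. Since $\p$ is unramified, $\nu(p) = 1$, so the condition $m > \tfrac{1}{p-1}$ is automatic for $m \ge 1$ when $p \ge 2$ — this is precisely why the primes dividing $2\,\ord(Q)$, the ramified primes, and the bad primes were thrown into $S$. I would either cite this directly (it is essentially \cite[IV.2.3, VII.2.2]{arithmetic} or the proof of the theorem of Lutz–Nagell type there) or spend one line deriving it: write $[n](T) = nT\,u(T) + (\text{terms of degree} \ge 2 \text{ whose coefficients in } nT \text{ vanish appropriately})$ and track valuations term by term, using that a degree-$k$ monomial $c_k z^k$ has valuation $\ge km > m + \nu(n)$ once $m(k-1) > \nu(n)$, handled by an induction on $\nu(n)$ reducing to the prime case $[p](z)$ where one uses $[p](T) \equiv pT + (\text{stuff}) \pmod{\deg \ge 2}$ together with $[p](T) \equiv T^p \cdot(\text{unit}) \pmod p$.

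The main obstacle is making the formal-group estimate rigorous with the correct bookkeeping of the ramification index — i.e., verifying that the exclusion of the bad, ramified, and $2\,\ord(Q)$-dividing primes really does guarantee the strict inequality $m > \nu(p)/(p-1)$ needed for the valuation of $[n](z)$ to split exactly as $\nu(n) + m$; without strictness one only gets $\ge$. Since we have arranged $\nu(p) = 1$ (unramified) and $p \ge 3$ is forced when $p \nmid 2$, and since $m = \tfrac12 h_\nu(M)$ is a positive \emph{integer} (as $h_\nu(M) = -\nu(x(M))$ is even on the kernel of reduction), we get $m \ge 1 > \tfrac{1}{p-1}$ for all $p \ge 3$, and the $p=2$ case is excluded — so the strict inequality holds and the exact formula follows. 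A secondary, purely notational point to get right is the translation between $h_\nu(\,\cdot\,)$ (defined via $x$-coordinates) and the formal parameter $t$, but this is the routine identity $h_\nu(\cdot) = 2\,\nu(t(\cdot))$ on the kernel of reduction and I would state it without belaboring it.
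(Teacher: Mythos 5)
Your argument is correct and follows the same formal-group route that the paper relies on: the paper's proof simply cites \cite{ChHa} for the computation you spell out. You correctly identify that excluding ramified primes, bad primes, and primes over $2$ forces $\nu(p)=1$ and $p\ge 3$, so $m=\tfrac12 h_\nu(M)\ge 1>1/(p-1)$ and the exact valuation formula $\nu([n](z))=\nu(n)+\nu(z)$ holds in the formal group, which is precisely the content of the cited lemma.
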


\begin{proof}
	This follows from the properties of the formal group of the elliptic curves. For details see \cite[Lemma page 200]{ChHa}.
\end{proof}

\begin{lemma}\label{quad}
	Given $R$ and $M$ in $E(K)$, there exists a constant $C(R,M)>0$ such that
	\[
	\hat{h}(nR+M)\geq\hat{h}(nR)-nC(R,M),
	\]
	for every $n\geq 1$.
\end{lemma}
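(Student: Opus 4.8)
The plan is to exploit the fact that the canonical height comes from a quadratic form, so that $\hat h(nR+M)$ expands as a quadratic polynomial in $n$ whose leading term is exactly $\hat h(nR)=n^2\hat h(R)$ and whose linear term is controlled by the bilinear pairing. Concretely, write $\langle\,\cdot\,,\,\cdot\,\rangle$ for the Néron–Tate pairing associated to $\hat h$, so that $\hat h(S+T)=\hat h(S)+\hat h(T)+\langle S,T\rangle$ for all $S,T\in E(K)$, and $\hat h(S)\ge 0$ always. Applying this with $S=nR$ and $T=M$ gives
\[
\hat h(nR+M)=\hat h(nR)+\hat h(M)+\langle nR,M\rangle=\hat h(nR)+\hat h(M)+n\langle R,M\rangle .
\]
Since $\hat h(M)\ge 0$, we get $\hat h(nR+M)\ge \hat h(nR)+n\langle R,M\rangle$, and setting $C(R,M):=\lvert\langle R,M\rangle\rvert$ (or any positive constant at least this large) yields $\hat h(nR+M)\ge \hat h(nR)-nC(R,M)$ for every $n\ge 1$, as required.

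If one prefers not to invoke the bilinear pairing as a black box, an essentially equivalent route is to use the parallelogram law for the canonical height, $\hat h(A+B)+\hat h(A-B)=2\hat h(A)+2\hat h(B)$, together with nonnegativity. From the parallelogram law with $A=nR$, $B=M$ one obtains $\hat h(nR+M)\ge 2\hat h(nR)+2\hat h(M)-\hat h(nR-M)$, and then a second application (or a short induction on $n$) lets one peel off the dependence on $n$ linearly; the cleanest packaging is still to define $\langle R,M\rangle=\tfrac12\big(\hat h(R+M)-\hat h(R)-\hat h(M)\big)$ and check bilinearity, which is standard (\cite[Chapter~8]{arithmetic}). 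Either way the constant $C(R,M)$ is explicit in terms of $\hat h(R)$, $\hat h(M)$, and $\hat h(R\pm M)$.

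There is essentially no obstacle here: the only thing to be slightly careful about is that the estimate must hold for \emph{all} $n\ge 1$ with a single constant, which is why it is important that the error term is genuinely linear in $n$ (coming from the bilinear pairing) rather than, say, quadratic — the quadratic parts on both sides cancel exactly because $\hat h(nR)=n^2\hat h(R)$. I would state the one-line pairing computation and cite \cite[Chapter~8]{arithmetic} for the existence and bilinearity of the Néron–Tate pairing.
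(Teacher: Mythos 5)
Your proposal is correct and uses exactly the same idea as the paper: exploit bilinearity of the Néron–Tate pairing so that the cross term in $\hat h(nR+M)$ is linear in $n$, then bound it (the paper takes $C(R,M)=\hat h(R)+\hat h(M)$ after dropping the nonnegative terms $\hat h(R+M)$ and $\hat h(M)/n$, while you take $C(R,M)=\lvert\langle R,M\rangle\rvert$; both work, and you correctly note that one may enlarge the constant if needed to keep it strictly positive). Your presentation is in fact a touch more direct than the paper's, which arrives at the same conclusion by expanding the identity $0=\langle nR,M\rangle-n\langle R,M\rangle$ and rearranging.
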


\begin{proof}
	The form $\langle R,M\rangle =\hat{h}(R+M)-\hat{h}(R)-\hat{h}(M)$ is bilinear and thus
	\begin{align*}
	0&=\langle nR,M\rangle -n\langle R,M\rangle \\&=\hat{h}(nR+M)-\hat{h}(nR)-\hat{h}(M)+n\Big(-\hat{h}(R+M)+\hat{h}(R)+\hat{h}(M)\Big).
	\end{align*}
It follows that,
\[
\hat{h}(nR)=\hat{h}(nR+M)+n\Big(-\hat{h}(R+M)+\hat{h}(R)+\hat{h}(M)-\frac{\hat{h}(M)}{n}\Big)
\]
and so we can choose
\[
C(R,M)=\hat{h}(R)+\hat{h}(M).
\]
\end{proof}

\begin{lemma}\label{ord}
	Let $R$ and $M$ be two points of $E$, $\nu$ an absolute value not in $S$ and suppose $h_\nu(R)>0$ and $h_\nu(M)>0$. Then
	\[
	h_\nu(R+M)\geq \min\{h_\nu(R),h_\nu(M)\}.
	\]
\end{lemma}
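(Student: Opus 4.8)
The plan is to reduce the statement to the behaviour of the formal group at the place $\nu$, exactly as in Lemma \ref{formal}. Since $\nu\notin S$, the curve has good reduction at $\nu$ and the equation is minimal, so there is a reduction map $E(K_\nu)\to \tilde E(k_\nu)$ whose kernel $E_1(K_\nu)$ is isomorphic, via the formal group $\hat E$, to the set of $\p$-adic points with positive valuation; concretely, $h_\nu(R)>0$ means precisely that $R$ lies in $E_1(K_\nu)$, and under the formal group parametrization by $z=-x/y$ one has $\nu(z(R))=h_\nu(R)$ (up to the standard normalization). The first step is therefore to recall this dictionary and to note that $E_1(K_\nu)$ is a subgroup, so that $h_\nu(R)>0$ and $h_\nu(M)>0$ force $R+M\in E_1(K_\nu)$, hence $h_\nu(R+M)>0$ as well and the quantity on the left is genuinely a formal-group valuation.

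Next I would invoke the addition law in the formal group: if $z_1=z(R)$ and $z_2=z(M)$, then $z(R+M)=F(z_1,z_2)$ where $F(X,Y)=X+Y+(\text{higher order terms})\in \OO_\nu[[X,Y]]$ is the formal group law. Because all coefficients of $F$ are $\nu$-integral, the ultrametric inequality gives $\nu(F(z_1,z_2))\geq \min\{\nu(z_1),\nu(z_2)\}$, which is exactly $h_\nu(R+M)\geq\min\{h_\nu(R),h_\nu(M)\}$. This is the whole content of the lemma; the only thing to be careful about is the translation between the valuation of the formal parameter $z$ and the quantity $h_\nu=\max\{0,-\nu(x)\}$, namely that on $E_1(K_\nu)$ one has $\nu(x)=-2\nu(z)<0$ and $\nu(y)=-3\nu(z)$, so $h_\nu(R)=-\nu(x(R))=2\nu(z(R))$; since the factor of $2$ appears uniformly on both sides of the inequality, it does not affect the conclusion.

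The main (and essentially only) obstacle is bookkeeping rather than mathematics: making sure that the hypotheses packaged into the set $S$ are exactly what is needed so that the formal group has $\nu$-integral coefficients and the parametrization $z\mapsto (x,y)$ is valid — this is why $S$ excludes bad reduction, non-minimal primes, and ramified primes. Granting that, one may simply cite \cite[Chapter IV]{arithmetic} (or the same source used for Lemma \ref{formal}, \cite[Lemma page 200]{ChHa}) for the formal group addition law and the identification of $h_\nu$ with twice the formal parameter's valuation, and the proof is complete.
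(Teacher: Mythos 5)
Your proposal is correct and follows essentially the same route as the paper: both arguments pass to the formal group at $\nu$ via the parameter $z=\pm x/y$, identify $h_\nu$ with twice the $\p$-adic valuation of $z$, and use the fact that the formal group law $F(X,Y)=X+Y+(\text{higher order})$ has $\nu$-integral coefficients to deduce $\nu(z(R+M))\geq\min\{\nu(z(R)),\nu(z(M))\}$. The paper simply packages this as $z(R+M)\equiv z(R)+z(M)\pmod{\p^{j/2}}$ via \cite[Proposition VII.2.2]{arithmetic}, which is what your ultrametric estimate expresses.
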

\begin{proof}
	We use the formal group as defined in \cite{arithmetic}. For every $P$, define 
	\[
	z(P):=\frac{x(P)}{y(P)}
	\]
	and so \[\ord_\p(z(P))=\frac{h_\nu(P)}{2},\]where $\p$ is the prime associated to $\nu$. Indeed, using the equation generating the elliptic curve and that $\nu$ is not in $S$ we have
	\[
	3\ord_\p(x(P))=2\ord_\p(y(P))<0.
	\] Let $j:=\min\{h_\nu(R),h_\nu(M)\}$. Therefore,
	\[
	z(R+M)\equiv z(R)+z(M)\equiv 0\mod{\p^{\frac{j}{2}}}.
	\]
	thanks to \cite[Proposition 7.2.2.]{arithmetic}.
	Thus, 
	\[\frac{h_\nu(R+M)}{2}=\ord_\p(z(R+M))\geq\frac{j}{2}\]
	and this concludes the proof.
\end{proof}

\section{Proof of Theorem \ref{Teo1}}\label{secteo1}

We are now ready to give the proof of Theorem \ref{Teo1}. 
We will use two trivial estimates. Let $\mu(n)$ the number of divisors of $n$. Then,
\[
\mu(n)\leq n.
\]
Given $x\in K$, define
\[
h(x):=\frac{1}{[K:\Q]}\sum_\nu n_\nu h_\nu(x)
\] 
where the sum runs over all the places of $K$ and $h_\nu(x)=\max\{0,-\nu(x)\}$. It is well known that $h(d)=\log d$ if $d$ is in $\Z$ and thus
\[
\sum_{d|n}h(d)=\sum_{d|n}\log d\leq \mu(n)\log n\leq n\log n.
\]

\begin{proof}[Proof of Theorem \ref{Teo1}]

Suppose that $Q$ is a $r$-torsion point, with $r$ a prime. Define
\[
K:=\max_{0\leq m\leq r-1} C_{mQ}
\]
with $C_R$ as defined in the previous section.
Hence, for every $n$ and for every $M\in E(K)$,
\[
h(M+nQ)\leq 2h(M)+K.
\]
		Take $n>1$ and suppose that $D_n$ has not a primitive divisor. We will prove that this is not possible for $n$ large enough. So, for every prime $\q$ over a valuation not in $S$, if $\q$ divides $D_n$, then it divides $D_{n-k}$ for some $k>0$ and therefore
		\[
		kP\equiv nP+Q-(n-k)P-Q\equiv O-O\equiv O \mod{\q}.
		\]
		Here $O$ is the identity of the elliptic curve and, given two point $R$ and $M$ in $E(K)$, we say that $R\equiv M\text{ mod }\p$ if the reduction of $R$ is equal to the reduction of $M$ modulo $\p$.
		Hence,
		\[
		rnP\equiv r(nP+Q)\equiv rO\equiv O \mod{\q}.
		\]
		Let $j:=\gcd(rn,k)$ and by Bézout's identity there exist $a$ and $b$ such that \[rna+bk=j.\] It follows that
		\[
		jP\equiv rnaP+bkP\equiv aO+bO\equiv O\mod{\q}
		\]
		and so $\q$ divides $B_{j}$ for some $j<n$ that divides $rn$. If $j$ divides $n$, then
		\[
		Q\equiv nP+Q\equiv O\mod{\q} 
		\]
		and this is absurd since $\q$ is not associated to a valuation in $S$. Then, for every divisor $\q$ of $D_n$, $\q$ divides $B_{rn/d}$ with $d$  a divisor of $n$, coprime with $r$ and greater than $r$.
	\begin{lemma}
		If $\nu$ is a valuation not in $S$ associated to a prime $\q$, that divide $D_n$ and $B_{rn/d}$ with $(r,d)=1$, then there exists a point $Q_d$, multiple of $Q$, such that
		\[
		h_\nu(nP+Q)\leq h_\nu(\frac{n}{d}P+Q_d)+2h_\nu(d).
		\]
	\end{lemma}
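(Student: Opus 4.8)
The plan is to produce $Q_d$ by an explicit congruence computation and then read the inequality off Lemma~\ref{formal}. Since $Q$ has prime order $r$ and $\gcd(d,r)=1$, the class of $d$ is invertible modulo $r$, so I would fix $e\in\Z$ with $de\equiv 1\pmod r$ and set $Q_d:=eQ$, which is a multiple of $Q$; because $rQ=O$ this gives $dQ_d=deQ=Q$ in $E(K)$. I would also note that $\q\mid B_{rn/d}$ forces $rn/d\in\Z$, i.e. $d\mid rn$, which with $\gcd(d,r)=1$ yields $d\mid n$; hence $M:=\tfrac nd P+Q_d$ is a well-defined point of $E(K)$ satisfying $dM=nP+Q$.

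The substantive step — and the one I expect to carry the argument — is to check that $M$ reduces to the identity at $\q$, i.e. that $h_\nu(M)>0$; this is exactly where the two divisibility hypotheses interact. Since $\nu\notin S$ and $\q\mid D_n$, the point $nP+Q$ reduces to $O$ modulo $\q$, so $n\bar P=-\bar Q$ in the reduced curve, while $\q\mid B_{rn/d}$ gives $\tfrac{rn}{d}\bar P=\bar O$. Substituting the first relation,
\[
\bar M=\tfrac nd\bar P+e\bar Q=\Big(\tfrac nd-en\Big)\bar P=\tfrac{n(1-ed)}{d}\,\bar P ,
\]
and since $r\mid(1-ed)$, the integer $\tfrac{n(1-ed)}{d}$ is a multiple of $\tfrac{rn}{d}$; hence $\bar M$ is a multiple of $\tfrac{rn}{d}\bar P=\bar O$, so $\bar M=\bar O$ and $h_\nu(M)>0$.

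With $h_\nu(M)>0$ in hand the conclusion is immediate: Lemma~\ref{formal}, applied to $M$ with multiplier $d$ (legitimate since $\nu\notin S$), gives $h_\nu(dM)=h_\nu(M)+2\nu(d)$, and substituting $dM=nP+Q$ and $M=\tfrac nd P+Q_d$ yields $h_\nu(nP+Q)=h_\nu\big(\tfrac nd P+Q_d\big)+2\nu(d)$, which is the asserted bound (in fact an equality, with $\nu(d)$ in place of $h_\nu(d)$; the weaker displayed form suffices later, where summing $n_\nu\nu(d)$ over the primes dividing $D_n$ produces $[K:\Q]\log d$). Apart from the reduction computation I do not anticipate any obstacle; one must only be careful that $Q_d$ is taken to be a multiple of $Q$ and that $d\mid n$ so that $\tfrac nd P$ makes sense, both of which are already guaranteed by the setup.
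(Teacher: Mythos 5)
Your proof is correct and it is genuinely different from the paper's argument, even though it arrives at the same choice of $Q_d$ (your $e$ is the inverse of $d$ mod $r$, and the paper's $b$ in the B\'ezout relation $ar+b(d-r)=1$ also satisfies $bd\equiv 1\pmod r$). The paper never observes the exact global identity $d\big(\tfrac nd P+Q_d\big)=nP+Q$ that you exploit; instead it writes $\tfrac nd P+Q_d$ as an integer combination $(a-b)P_1+bP_2$ of the two points $P_1=\tfrac{rn}{d}P$ and $P_2=nP+Q$ and invokes Lemma~\ref{ord} (the ultrametric inequality in the formal group) to get $h_\nu(\tfrac nd P+Q_d)\geq h_\nu(P_1)$, together with a separate application of Lemma~\ref{formal} to relate $h_\nu(P_1)$ and $h_\nu(P_2)$. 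Your route bypasses Lemma~\ref{ord} entirely: you reduce the whole statement to a single application of Lemma~\ref{formal} with multiplier $d$, after directly verifying that $\tfrac nd P+Q_d$ reduces to $O$ at $\q$. This yields an equality rather than an inequality, which is tidier and makes the structure of the bound more transparent. You are also right that the lemma as printed uses $h_\nu(d)$ where $\nu(d)$ is meant (compare the statement of Lemma~\ref{formal}); with the intended reading your conclusion $h_\nu(nP+Q)=h_\nu(\tfrac nd P+Q_d)+2\nu(d)$ is exactly what the downstream summation over $\nu\notin S$ needs to recover the $2h(d)=2\log d$ term.
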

\begin{proof}
 Define \[P_1=\frac{rn}{d}P \mbox{  and  } P_2=nP+Q.\] Thus,\[P_1\equiv P_2\equiv O \mod{\q}\]
 and
	\[
	h_\nu(P_2)=h_\nu(rP_2)=h_\nu(rnP)=h_\nu(\frac{rn}{d}P)+2h_\nu(d)=h_\nu(P_1)+2h_\nu(d).
	\]
	Since $d$ is coprime with $r$ we have $(r,d-r)=1$ and therefore
	\[
	ar+b(d-r)=1
	\]
	with $a$ and $b$ integers.
	Then,
	\begin{align*}
	\frac{n}{d}P+bQ&=a\frac{rn}{d}P+b\frac{n(d-r)}{d}P+bQ\\&=a\frac{rn}{d}P+b(nP+Q-\frac{rn}{d}P)\\&=(a-b)P_1+bP_2.
	\end{align*}
	So, thanks to Lemma \ref{ord},
	\[
	h_\nu(\frac{n}{d}P+bQ)\geq h_\nu(P_1)=h_\nu(P_2)-2h_\nu(d).
	\]
	Hence, define $Q_d=bQ$ and this concludes the proof.
\end{proof}

Observe that $Q_d$ does not depend on $\nu$, but only on $Q$ and $d$. If $\q$ is the prime associated to $\nu$ and $h_\nu(nP+Q)>0$, then $\q$ divides $D_n$ and so there exists $d$ such that $\q$ divides $B_{nr/d}$, $(r,d)=1$ and $d>r$. Thus, we can apply the previous lemma, obtaining that, if $h_\nu(nP+Q)>0$, then there exists $d$ such that \[h_\nu(nP+Q)\leq h_\nu(\frac{n}{d}P+Q_d)+2h_\nu(d).\] Therefore,
\[
n_\nu h_\nu(nP+Q)\leq \sum_{\substack{d|n \\ d>r}} n_\nu h_\nu(\frac{n}{d}P+Q_d)+2n_\nu h_\nu(d)
\]
since every addend in the RHS is greater than $0$ and the LHS is less than an addend in the RHS. Hence,
\[
\sum_{\nu \not\in S}n_\nu h_\nu(nP+Q)\leq \sum_{\nu \not\in S}\sum_{\substack{d|n \\ d>r}} n_\nu h_\nu(\frac{n}{d}P+Q_d)+2n_\nu h_\nu(d).
\]


So,
\begin{align*}
\frac{1}{[K:\Q]}\sum_{\nu \not\in S}n_\nu h_\nu(nP+Q)&
\leq\frac{1}{[K:\Q]} \sum_{\nu \not\in S}\sum_{\substack{d|n \\ d>r}}n_\nu h_\nu(\frac{n}{d}P+Q_d)+2n_\nu h_\nu(d)\\&
\leq \sum_{\substack{d|n \\ d>r}}h(\frac{n}{d}P+Q_d)+2h(d)\\&
\leq \sum_{\substack{d|n \\ d>r}}2h(\frac{n}{d}P)+2h(d)+K\\&
\leq \Big(\sum_{\substack{d|n \\ d>r}}4\hat{h}(\frac{n}{d}P)\Big)+(K+2C_E)\mu(n)+2n\log n\\&
\leq 4\hat{h}(P)n^2\Big(\sum_{\substack{d|n \\ d>2}}\frac{1}{d^2}\Big)+2n\log n+(K+2C_E)n\\&
\leq 4\hat{h}(P)n^2\sum_{d>2}\frac{1}{d^2}+(K+2C_E)n+2n\log n\\&
= 4\hat{h}(P)n^2(\zeta(2)-1-\frac{1}{4})+(K+2C_E)n+2n \log n\\&
\leq 1.6n^2\hat{h}(P)+(K+2C_E)n+2n\log n.
\end{align*}

	Now, we have to deal with the absolute values in $S$. Using Proposition \ref{Siegel} with $\eps=(100\#S)^{-1}$ we have
\begin{align*}
\frac{1}{[K:\Q]}\sum_{\nu \in S}n_\nu h_\nu(nP+Q)&\leq \frac{1}{[K:\Q]}\sum_{\nu\in S}n_\nu\eps h(nP+Q)\\&\leq \frac{h(nP+Q)}{100}\\&\leq \frac{2h(nP)+C_Q}{100}\\&\leq 0.04 n^2\hat{h}(P)+\frac{C_Q+2C_E}{100},
\end{align*}
for $n>\max_{\nu\in S}\{N(\nu,\eps)\}$. 
Finally, using Lemma \ref{quad},
\begin{align*}
2n^2\hat{h}(P)&\leq 2(\hat{h}(nP+Q)+nC(P,Q))\\&\leq h(nP+Q)+2nC(P,Q)+C_E\\&=\frac{\sum_{\nu \in S}n_\nu h_\nu(nP+Q)+\sum_{\nu \notin S}n_\nu h_\nu(nP+Q)}{[K:\Q]}+2nC(P,Q)+C_E\\&\leq 1.64n^2\hat{h}(P)+Jn\log n
\end{align*}
with 
\[
J:=2+2K+4C_E+2C(P,Q)
\]
and then
\[
0.36n^2\leq \frac{Jn\log n}{\hat{h}(P)}
\]
Taking
\[
C:=\max\{2,8\frac{J^2}{\hat{h}(P)^2},\max_{\nu\in S}\{N(\nu,\eps)\}\}
\]
we have that, for every $n>C$, the inequality does not hold and then $D_n$ has a primitive divisor.
	
\end{proof}

\begin{remark}
	The constant $C$ is not effective since Siegel's Theorem it is not.
\end{remark}

\section{An elliptic analogue of Artin's conjecture}\label{secteo2}
Let $E$ be an elliptic curve defined over $\Q$ generated by
\[
y^2+a_1xy+a_3y=x^3+a_2x^2+a_4x+a_6.
\]
Let $S_1$ be the finite set of primes such that $E$ is not in minimal form in $\Q_p$ and such that $E$ has bad reduction. Take $P$ a non-torsion point and $Q$ in $E(\Q)$. We want to prove Theorem \ref{Teo2}.
\begin{proof}[Proof of Theorem \ref{Teo2}]
If $Q$ is in the orbit of $P$ in $E(\Q)$, then the theorem is trivial. So, we can suppose that $nP\neq Q$ for every $n$.
As defined in the introduction, let
\[
N_{P,Q}(x):=\{p\leq x \text{ such that }  p\nmid\Delta \text{  and  } Q \in \langle P \rangle\ \text{ in } E(\F_p)\}.
\]
If $Q$ is in the orbit of $P$ modulo $p$, then there exists $n$ such that \[nP\equiv Q\mod{p}.\] Define \[x(nP+Q)=\frac{C_n}{D_n}\text{   with   } C_n,D_n\in \Z \text{  and  }(C_n,D_n)=1.\]
So, $Q$ is in the orbit of $P$ modulo $p$ if and only if $p$ divides $D_n$ for some $n$. If $p$ divides $D_n$ and it is not in $S_1$, so $\ord_p(D_n)$ is even and then 
\[
\log p^2\leq \log \abs{D_n}\leq h(nP+Q)\leq 2h(nP)+C_Q\leq 4n^2\hat{h}(P)+C_Q+2C_E.
\]
Now, define \[
N=\sqrt{\frac{2\log x-C_Q-2C_E}{4\hat{h}(P)}}.
\]
Thus, if $p$ divides $D_n$ for $1\leq n\leq N$, then
\[
\log p\leq 2n^2\hat{h}(P)+\frac{C_Q}{2}+C_E\leq 2N^2\hat{h}(P)+\frac{C_Q}{2}+C_E= \log x.
\]
So, if $p$ divides $D_n$ for some $1\leq n\leq N$, then $Q$ is in the orbit of $P$ modulo $p$ and $p\leq x$. Thus, we conclude that
\[
\{p\mid D_n \text{ such that } p\notin S_1 \text{  and  } 1\leq n\leq N\}\subseteq N_{P,Q}(x).
\]
Thus,
\[
\omega_\Q(\prod_{n=1}^{N}D_n)\leq \#N_{P,Q}(x)+\#S_1
\]
where $\omega_\Q$ is the function that counts the number of distinct prime divisors of an integer.
In \cite[Theorem 1.1.]{shparlinski}, it was proved that
\[\omega_K(\prod_{n=M}^{N+M}D_n)\gg N\]
and then 
\[
 \#N_{P,Q}(x)\gg \omega_\Q(\prod_{n=1}^{N}D_n)\gg N\gg \sqrt{\log x}.
\]

\end{proof}

\begin{corollary}
	If $P$ is a non-torsion point and $Q$ is a torsion point of prime order, then
	\[
	\lim_{x\to \infty} \frac{\#N_{P,Q}(x)}{\sqrt{\log x}}\geq \frac{1}{\sqrt{2\hat{h}(P)}}.
	\]
\end{corollary}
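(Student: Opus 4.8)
The plan is to feed Theorem~\ref{Teo1} into the size estimates already used for Theorem~\ref{Teo2}. Since here $K=\Q$, a primitive divisor of $D_n$ is an ordinary prime number. First I would invoke Theorem~\ref{Teo1} to get a constant $C$ such that every $n>C$ admits a primitive divisor $p_n$ of $D_n$, and then observe that $n\mapsto p_n$ is injective on $\{n>C\}$: if $m<n$ and $p_m=p_n$, then $p_n\mid D_m$, contradicting the primitivity of $p_n$ in $D_n$. Consequently at most $\#S_1$ of the indices $n>C$ can have $p_n\in S_1$; I would throw those away and keep the remaining index set, say $\mathcal N$.

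Next I would bound $p_n$ from above for $n\in\mathcal N$. Since $p_n\notin S_1$, the curve has good reduction and minimal model at $p_n$, so $p_n\nmid\Delta$, and — exactly as in the proof of Theorem~\ref{Teo2} — $\ord_{p_n}(D_n)$ is even, hence $p_n^2\mid D_n$. This gives
\[
2\log p_n\ \le\ \log\abs{D_n}\ \le\ h(nP+Q)\ \le\ 2h(nP)+C_Q\ \le\ 4n^2\hat h(P)+C_Q+2C_E,
\]
so $\log p_n\le 2n^2\hat h(P)+\tfrac{C_Q}{2}+C_E$. At the same time, $p_n\mid D_n$ means $x(nP+Q)$ has a pole at $p_n$, i.e.\ $nP+Q\equiv O\pmod{p_n}$, whence $Q\equiv(-n)P$ in $E(\F_{p_n})$, so $Q\in\langle P\rangle$ there.

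Then, given $x$, I would set
\[
N:=\sqrt{\frac{\log x-\tfrac{C_Q}{2}-C_E}{2\hat h(P)}},
\]
so that every integer $n\le\lfloor N\rfloor$ satisfies $\log p_n\le 2\lfloor N\rfloor^2\hat h(P)+\tfrac{C_Q}{2}+C_E\le\log x$, i.e.\ $p_n\le x$; combined with the previous step this puts $p_n$ into $N_{P,Q}(x)$ for every $n\in\mathcal N$ with $n\le\lfloor N\rfloor$, and these primes are pairwise distinct. Since $\mathcal N$ misses at most $C+\#S_1$ positive integers, I get $\#N_{P,Q}(x)\ge\lfloor N\rfloor-C-\#S_1$; dividing by $\sqrt{\log x}$ and sending $x\to\infty$, the right-hand side tends to $\bigl(2\hat h(P)\bigr)^{-1/2}$, which yields the claimed bound (read as a $\liminf$, since only a lower estimate is produced).

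The only genuinely nontrivial ingredient is Theorem~\ref{Teo1}, which is where the assumption that $Q$ is a torsion point of prime order is used; the rest is bookkeeping. The main point requiring care will be the evenness of $\ord_{p_n}(D_n)$: this is exactly what turns $\log\abs{D_n}\lesssim 4n^2\hat h(P)$ into $\log p_n\lesssim 2n^2\hat h(P)$ and so produces the sharp constant $\bigl(2\hat h(P)\bigr)^{-1/2}$ instead of $\bigl(4\hat h(P)\bigr)^{-1/2}$, and it relies on $p_n\notin S_1$, which is the reason for the (harmless) finite correction $\#S_1$.
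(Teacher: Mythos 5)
Your proposal is correct and follows essentially the same route as the paper: invoke Theorem~\ref{Teo1} to obtain a primitive divisor $p_n$ of $D_n$ for each $n>C$, use the evenness of $\ord_{p_n}(D_n)$ at primes outside $S_1$ to get $\log p_n\le 2n^2\hat h(P)+\tfrac{C_Q}{2}+C_E$, deduce $p_n\le x$ for $n\le N$, and count. Your added remarks on the injectivity of $n\mapsto p_n$, on the $\lfloor N\rfloor$ bookkeeping, and on reading the conclusion as a $\liminf$ are small clarifications of what the paper leaves implicit.
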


\begin{proof}
	Thanks to Theorem \ref{Teo1}, we know that $D_n$ has a primitive divisor for every $n\geq C$, with $C$ a constant depending on $E, P$ and $Q$. So,
	\[
	\omega_\Q(\prod_{n=1}^N D_n)\geq N-C
	\]
	and therefore
	\[
	\#N_{P,Q}(x)\geq N-C-\#S_1=\sqrt{\frac{\log x-C_Q/2-C_E}{2\hat{h}(P)}}-C-\#S_1.
	\]
	Hence,
	\[
		\lim_{x\to \infty} \frac{\#N_{P,Q}(x)}{\sqrt{\log x}}\geq\lim_{x\to \infty} \frac{\sqrt{\frac{\log x-C_Q/2-C_E}{2\hat{h}(P)}}-C-\#S_1}{\sqrt{\log x}}= \frac{1}{\sqrt{2\hat{h}(P)}}.
	\]
\end{proof}
\normalsize
\baselineskip=17pt
\bibliographystyle{plain}
\bibliography{biblio}

UNIVERSIT\'A DI PISA, DIPARTIMENTO DI MATEMATICA, LARGO BRUNO PONTECORVO 5, PISA, ITALY\\
\textit{E-mail address}: verzobio@student.dm.unipi.it

\end{document}